\newtheorem{proposition}{Proposition}[section]
  \newtheorem{theorem}[proposition]{Theorem}
  \newtheorem{lemma}[proposition]{Lemma}
  \newtheorem{corollary}[proposition]{Corollary}
\theoremstyle{definition}
  \newtheorem{remark}[proposition]{Remark}
  \newtheorem{example}[proposition]{Example}
\numberwithin{equation}{section}
\numberwithin{proposition}{section}
\DeclareMathOperator{\Mor}{Mor}
\DeclareMathOperator{\M}{\mathsf{M}}
\DeclareMathOperator{\C}{C}
\newcommand{\st}{\;\vline\;} 	
\newcommand{\cst}{\ifmmode\mathrm{C}^*\else{$\mathrm{C}^*$}\fi}
\newcommand{\tens}{\otimes}
\newcommand{\id}{\mathrm{id}}
\newcommand{\eps}{\varepsilon}
\newcommand{\comp}{\circ}
\newcommand{\qqquad}{\quad\qquad}
\newcommand{\I}{\mathds{1}}
\newcommand{\CC}{\mathbb{C}}
\newcommand{\GG}{\mathbb{G}}
\newcommand{\KK}{\mathbb{K}}
\newcommand{\sC}{\mathsf{C}}
\newcommand{\sB}{\mathsf{B}}
\newcommand{\sA}{\mathsf{A}}
\newcommand{\cG}{\mathscr{G}}
\newcommand{\cE}{\mathscr{E}}
\begin{document}

\title{Quantum Semigroups from Synchronous Games}

\date{\today}

\author{Piotr M.~So{\l}tan}
\address{Department of Mathematical Methods in Physics\\
Faculty of Physics\\
Warsaw University}
\email{piotr.soltan@fuw.edu.pl}


\begin{abstract}
We show that the $\mathrm{C}^*$-algebras associated with synchronous games give rise to certain quantum families of maps between the input and output sets of the game. In particular situations (e.g.~for graph endomorphism games) these quantum families have a natural quantum semigroup structure and if the condition of preservation of a natural state is added, they are in fact compact quantum groups.
\end{abstract}


\keywords{synchronous game, quantum semigroup, quantum group, graph homomorphism}

\subjclass[2010]{Primary 46L89; secondary 91A80, 58B32, 05C60}
\maketitle

\section{Introduction}

Recently there has been a considerable amount of activity focused on certain extension of the notion of a homomorphism of graphs. The resulting notions of a \emph{quantum homomorphism} or \emph{quantum endomorphism} described e.g.~in \cite[Section 2]{MD} and \cite{HMPS} are based on the notion of a two-person game and various ``quantum strategies'' for such games (a more thorough description of this is contained in Example \ref{grHom} below). The latter paper provides a construction which associates a \cst-algebra to each synchronous game -- including the graph homomorphism game. The aim of this paper is to provide some broader context for these objects by relating them to the notion of a \emph{quantum family of maps} studied e.g.~in \cite{qfam,qmqs,invert}. Similar ideas have appeared recently in this context e.g.~in \cite{R1,R2,R3}.

In what follows we will briefly recall the definition of a \cst-algebra associated to a synchronous game, show what quantum family of maps it is related to and describe its universal property. Then we will identify certain special games for which the this quantum family of maps naturally defines a quantum semigroup.

In the last section we add a condition that the quantum families preserve a natural measure on the set of inputs of the game which forces the quantum semigroup to become a quantum group which is in fact a quantum subgroup of the quantum permutation group defined in \cite{wang}. This leads us to the notion of a quantum automorphism group of a finite graph introduced by T.~Banica in \cite{banica} and which is closely related to the one defined by J.~Bichon in \cite{jb}.

We will not be using much of the theory of compact quantum groups, but the interested reader will find all the necessary information e.g.~in \cite{cqg} as well as introductory sections of \cite{jb,wang}. Since quantum semigroups are less sophisticated, there does not seem to be ample literature dealing with these objects. The definitions and elementary examples are contained e.g.~in \cite{qfam,qmqs}. For the purposes of this paper it is enough to remember that a \emph{quantum semigroup} $\mathbb{S}$ is described by a \cst-algebra denoted $\C_0(\mathbb{S})$ and a morphism $\Delta_{\mathbb{S}}\in\Mor(\C_0(\mathbb{S}),\C_0(\mathbb{S})\tens\C_0(\mathbb{S}))$ (see below for a brief discussion of the notion of a morphism of \cst-algebras) called the \emph{comultiplication} which satisfies the condition of \emph{coassociativity}, i.e.
\[
(\Delta_{\mathbb{S}}\tens\id)\comp\Delta_{\mathbb{S}}=(\id\tens\Delta_{\mathbb{S}})\comp\Delta_{\mathbb{S}}.
\]
A quantum semigroup is \emph{compact} if the \cst-algebra $\C_0(\mathbb{S})$ is unital and in this case we write $\C(\mathbb{S})$ instead of $\C_0(\mathbb{S})$.

Throughout this note we will be using the language of the theory of \cst-algebras. The class of morphisms between \cst-algebras appropriate for ``non-commutative topology'' is the one proposed e.g.~in \cite{unbo}: if $\sA$ and $\sC$ are \cst-algebras then a morphism from a $\sA$ to $\sC$ is a $*$-homomorphism $\Phi:\sA\to\M(\sC)$ (where $\M(\sC)$ is the \emph{multiplier algebra} of $\sC$) which is non-degenerate, i.e.~$\Phi(\sA)\sC$ is dense in $\sC$. We will denote the set of all morphisms form $\sA$ to $\sC$ by the symbol $\Mor(\sA,\sC)$. In case $\sA$ is unital, elements of $\Mor(\sA,\sC)$ are simply unital $*$-homomorphisms form $\sA$ to $\M(\sC)$. In order to form compositions of morphisms of \cst-algebras one uses a natural extension of each $\Phi\in\Mor(\sA,\sC)$ to a $*$-homomorphism $\M(\sA)\to\M(\sC)$ (see \cite[Section 0]{unbo} or \cite[Chapter 2]{lance}).

We end this introduction with a simple and certainly well-known lemma concerning projections. By a \emph{projection} we mean an element $p$ of a \cst-algebra which satisfies $p^*p=p$.

\begin{lemma}\label{commpij}
Let $p_1,\dotsc,p_N$ be projections such that $p_1+\dotsm+p_N=\I$. Then for any $i,j\in\{1,\dotsc,N\}$ we have $p_ip_j=\delta_{i,j}p_i$.
\end{lemma}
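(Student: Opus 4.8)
The plan is to first unpack the somewhat unusual definition of a projection and then exploit positivity. \emph{First}, I would observe that the hypothesis $p^*p=p$ already forces $p$ to be self-adjoint and idempotent: taking adjoints of $p^*p=p$ gives $p^*p=p^*$ (since $(p^*p)^*=p^*p$), so comparing with $p^*p=p$ yields $p=p^*$, and then $p^2=p^*p=p$. Thus each $p_k$ is a genuine self-adjoint idempotent, which is precisely what makes the positivity arguments below available.

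Next, I would fix an index $i$ and compress the resolution of the identity by $p_i$ on both sides. Using $\I=\sum_k p_k$ and $p_i^2=p_i$, one computes
\[
p_i = p_i\I p_i = \sum_{k=1}^N p_i p_k p_i = p_i + \sum_{k\neq i} p_i p_k p_i,
\]
whence $\sum_{k\neq i} p_i p_k p_i = 0$. The crucial observation is that every summand is positive: writing $b_k=p_k p_i$ and using $p_k^*=p_k$ together with $p_k^2=p_k$, we have $p_i p_k p_i = b_k^* b_k \ge 0$.

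The key step --- and the only place where more than algebra is used --- is that a finite sum of positive elements of a \cst-algebra vanishes only if each term vanishes; indeed $0\le p_i p_k p_i \le \sum_{l\neq i} p_i p_l p_i = 0$ for each $k\neq i$. Hence $b_k^* b_k=0$, and the \cst-identity $\|b_k\|^2=\|b_k^* b_k\|=0$ gives $b_k=p_k p_i=0$; taking adjoints, $p_i p_k=0$ for all $k\neq i$. Combined with $p_i p_i=p_i^2=p_i$, this is exactly $p_i p_j=\delta_{i,j}p_i$. I do not anticipate a genuine obstacle here; the main point to get right is the positivity bookkeeping that converts the single identity $\sum_{k\neq i} p_i p_k p_i=0$ into the vanishing of each individual product.
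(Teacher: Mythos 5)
Your proof is correct and takes essentially the same route as the paper: both arguments compress the resolution of the identity $p_1+\dotsm+p_N=\I$ by a single projection and then use positivity of $p_ip_kp_i=(p_kp_i)^*(p_kp_i)$ together with the \cst-identity to conclude that each cross term vanishes, the only (cosmetic) difference being that you treat all $k\neq i$ simultaneously while the paper isolates one pair $(i,j)$ and writes $\I=p_i+p_j+q$. Your preliminary verification that $p^*p=p$ forces $p=p^*$ and $p^2=p$ is a welcome piece of care that the paper leaves implicit.
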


\begin{proof}
Clearly it is enough to consider $i\neq{j}$. Let $q=\sum\limits_{i\neq{k}\neq{j}}p_k$. Then $q\geq{0}$ and $p_i+p_j+q=\I$. We have
\[
0\leq{p_jp_ip_j}=p_j(\I-p_j-q)p_j=p_j(\I-p_j)p_j-p_jqp_j=-p_jqp_j\leq{0}
\]
and hence $p_jp_ip_j=0$. It follows that $(p_ip_j)^*(p_ip_j)=p_j{p_i}^2p_j=p_jp_ip_j=0$ and consequently $p_ip_j=0$.
\end{proof}

It follows from Lemma \ref{commpij} that $\CC^N$ is the universal \cst-algebra generated by a collection of $N$ projections summing up to $\I$. Indeed, if $p_1+\dotsm+p_N=\I$ then any non-commutative polynomial in $p_1,\dotsc,p_N$ belongs to $\operatorname{span}\{p_1,\dotsc,p_N\}$ and hence the dimension of the universal \cst-algebra generated by $N$ projections summing up to $\I$ is not greater than $N$. In particular for any \cst-algebra $\sC$ a morphism $\Phi\in\Mor(\CC^N,\sC)$ is uniquely determined by a collection of $N$ projections $p_1,\dotsc,p_N\in\M(\sC)$ summing up to $\I$. We then have $\Phi(e_i)=p_i$, where $\{e_1,\dotsc,e_N\}$ is the standard basis of $\CC^N$.

\section{The \texorpdfstring{\cst}{C*}-algebra of a synchronous game}

A \emph{two-person finite input-output game} is specified by four finite sets $I_{\text{\rm{A}}},I_{\text{\rm{B}}},O_{\text{\rm{A}}},O_{\text{\rm{B}}}$ ($I$ and $O$ standing for ``input'' and ``output'' with subscripts $\text{\rm{A}}$ and $\text{\rm{B}}$ meaning ``Alice'' and ``Bob'') and a function
\[
\lambda:O_{\text{\rm{A}}}\times{O_{\text{\rm{B}}}}\times{I_{\text{\rm{A}}}}\times{I_{\text{\rm{B}}}}\longrightarrow\{0,1\}
\]
specifying the \emph{rules} of the game. The game is played by Alice and Bob with a referee who asks questions $x\in{I_{\text{\rm{A}}}}$ and $y\in{I_{\text{\rm{B}}}}$ of Alice and Bob and they independently provide answers $a\in{O_{\text{\rm{A}}}}$ and $b\in{O_{\text{\rm{B}}}}$. The value of $\lambda(a,b,x,y)$ is interpreted as the outcome of a round: $\lambda(a,b,x,y)=1$ means Alice and Bob win, while $\lambda(a,b,x,y)=0$ indicates they lose. In what follows we will use the notation $\cG=(I_{\text{\rm{A}}},I_{\text{\rm{B}}},O_{\text{\rm{A}}},O_{\text{\rm{B}}},\lambda)$ for a a given two-person finite input-output game.

A game $\cG=(I_{\text{\rm{A}}},I_{\text{\rm{B}}},O_{\text{\rm{A}}},O_{\text{\rm{B}}},\lambda)$ is called \emph{synchronous} if $I_{\text{\rm{A}}}=I_{\text{\rm{B}}}$, $O_{\text{\rm{A}}}=O_{\text{\rm{B}}}$ (denoted simply by $I$ and $O$) and $\lambda$ satisfies
\begin{equation}\label{lambdaSynch}
\lambda(a,b,x,x)=\delta_{a,b},\qqquad{x}\in{I},\:{a,b}\in{O}.
\end{equation}
One may propose the term ``easiest synchronous game'' for the game with $\lambda=1$ except in cases given by \eqref{lambdaSynch}. The terminology reflects the fact that it is ``easiest'' to win a game with $\lambda$ having maximally many values $1$. We will denote the easiest game with input set $I$ and output set $O$ by $\cE_{I,O}$.

Let $\cG=(I,O,\lambda)$ be a synchronous game. In \cite{HMPS} the \cst-algebra $\sA(\cG)$ is defined as the universal \cst-algebra generated by a family $\{p_{x,a}\}_{x\in{I},a\in{O}}$ of projections satisfying
\[
\sum_{a\in{I}}p_{x,a}=\I,\qqquad{x}\in{I}
\]
and
\[
p_{x,a}p_{y,b}=\lambda(a,b,x,y)p_{x,a}p_{y,b},\qqquad{x,y}\in{I},\:a,b\in{O}
\]
(in other words $p_{x,a}$ and $p_{y,b}$ are orthogonal whenever $\lambda(a,b,x,y)=0$).

In what follows we shall write $\CC^I$ and $\CC^O$ for the algebras of all functions $I\to\CC$ and $O\to\CC$ respectively.

\begin{theorem}\label{univAE}
There exists a unique $\Phi_{\cE_{I,O}}\in\Mor(\CC^O,\CC^I\tens\sA(\cE_{I,O}))$ such that for any \cst-algebra $\sB$ and any $\Psi\in\Mor(\CC^O,\CC^I\tens\sB)$ there is a unique $\Theta\in\Mor(\sA(\cE_{I,O}),\sB)$ such that $\Psi=(\id\tens\Theta)\comp\Phi_{\cE_{I,O}}$.
\end{theorem}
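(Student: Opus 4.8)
The plan is to reduce everything to the explicit description of morphisms out of the finite-dimensional algebras $\CC^O$ and $\CC^I$ in terms of projections, as in the discussion following Lemma \ref{commpij}. The key observation is that $\CC^O$ is the universal \cst-algebra on $|O|$ projections summing to $\I$, so a morphism $\Psi\in\Mor(\CC^O,\CC^I\tens\sB)$ is the same datum as a collection $\{\Psi(e_a)\}_{a\in O}$ of projections in $\M(\CC^I\tens\sB)=\CC^I\tens\M(\sB)$ summing to $\I$. Since $\CC^I$ is finite-dimensional with minimal projections indexed by $I$, writing $f_x\in\CC^I$ for the indicator of $x$, each such projection decomposes as $\Psi(e_a)=\sum_{x\in I}f_x\tens q_{x,a}$ for some family $\{q_{x,a}\}$ in $\M(\sB)$, and the conditions that the $\Psi(e_a)$ be projections summing to $\I$ translate exactly into: each $q_{x,a}$ is a projection, and $\sum_{a\in O}q_{x,a}=\I$ for every $x\in I$.

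First I would construct $\Phi_{\cE_{I,O}}$ itself by taking $\sB=\sA(\cE_{I,O})$ and defining $q_{x,a}=p_{x,a}$, the universal generators. For the easiest game $\cE_{I,O}$ the only relations imposed are $\sum_{a}p_{x,a}=\I$ together with the orthogonality relations forced by \eqref{lambdaSynch}; but \eqref{lambdaSynch} only constrains $\lambda(a,b,x,x)=\delta_{a,b}$, which for fixed $x$ gives $p_{x,a}p_{x,b}=0$ for $a\neq b$ --- and this is already a consequence of Lemma \ref{commpij} applied to the family $\{p_{x,a}\}_{a\in O}$ summing to $\I$. Hence $\sA(\cE_{I,O})$ carries \emph{no} extra relations beyond having, for each $x$, a partition of unity into projections, so it is the free product (amalgamated over nothing, i.e.\ the universal unital \cst-algebra) of $|I|$ copies of such partitions. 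The formula $\Phi_{\cE_{I,O}}(e_a)=\sum_{x}f_x\tens p_{x,a}$ then defines the required morphism, and I would check non-degeneracy is automatic since both algebras are unital and $\Phi_{\cE_{I,O}}$ is unital.

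Next I would verify the universal property. Given $\Psi\in\Mor(\CC^O,\CC^I\tens\sB)$, extract the projections $\{q_{x,a}\}$ as above: for each fixed $x$ they form a partition of unity in $\M(\sB)$. By the universal property of $\sA(\cE_{I,O})$ --- whose generators satisfy exactly these and only these relations --- the assignment $p_{x,a}\mapsto q_{x,a}$ extends uniquely to a unital $*$-homomorphism $\Theta\colon\sA(\cE_{I,O})\to\M(\sB)$, which is a morphism (non-degenerate, being unital). A direct computation then gives $(\id\tens\Theta)\comp\Phi_{\cE_{I,O}}(e_a)=\sum_x f_x\tens\Theta(p_{x,a})=\sum_x f_x\tens q_{x,a}=\Psi(e_a)$, so $\Psi=(\id\tens\Theta)\comp\Phi_{\cE_{I,O}}$. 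Uniqueness of $\Theta$ follows because the $p_{x,a}$ generate $\sA(\cE_{I,O})$ and $\Theta$ is forced on them by the equation $\Psi=(\id\tens\Theta)\comp\Phi_{\cE_{I,O}}$ together with the linear independence of the $f_x$.

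The main obstacle, and the only genuinely substantive point, is the bookkeeping in the decomposition $\Psi(e_a)=\sum_x f_x\tens q_{x,a}$: one must confirm that this is the general form of a morphism $\CC^O\to\CC^I\tens\sB$ and that \emph{exactly} the relations defining $\sA(\cE_{I,O})$ emerge --- in particular that the synchronicity condition \eqref{lambdaSynch} contributes nothing beyond what Lemma \ref{commpij} already guarantees for a partition of unity, so that $\sA(\cE_{I,O})$ really is the universal object corepresenting the functor $\sB\mapsto\Mor(\CC^O,\CC^I\tens\sB)$. Everything else is a formal consequence of the universal property of finite-dimensional commutative \cst-algebras as algebras generated by orthogonal projections.
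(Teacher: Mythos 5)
Your proposal is correct and follows essentially the same route as the paper's own proof: identify a morphism $\Psi\in\Mor(\CC^O,\CC^I\tens\sB)$ with the family of projections $q_{x,a}=(\delta_x\tens\id)\Psi(e_a)$ forming a partition of unity for each $x$, invoke the universal property of $\sA(\cE_{I,O})$ to get $\Theta$, and read off uniqueness from the fact that the $p_{x,a}$ generate. The only difference is that you spell out explicitly (via Lemma \ref{commpij}) why the orthogonality relations coming from \eqref{lambdaSynch} are automatically satisfied by the $q_{x,a}$ --- a point the paper leaves implicit in the phrase ``by definition of $\sA(\cE_{I,O})$'' --- which is a welcome clarification rather than a deviation.
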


\begin{proof}
As we mentioned in the introduction, for any \cst-algebra $\sC$ a morphism from $\CC^O$ to $\sC$ is determined uniquely by a family of $|O|$ projections in $\M(\sC)$ summing up to $\I$. The elements
\[
\biggl\{\sum_{x\in{I}}e_x\tens{p_{x,a}}\biggr\}_{a\in{O}}
\]
form such a family for $\sC=\CC^I\tens\sA(\cE_{I,O})$ and hence there exists a unique $\Phi_{\cE_{I,O}}\in\Mor(\CC^O,\CC^I\tens\sA(\cE_{I,O})$ such that
\[
\Phi_{\cE_{I,O}}(e_a)=\sum_{x\in{I}}e_x\tens{p_{x,a}},\qqquad{a}\in{O}.
\]

Now let $\Psi\in\Mor(\CC^O,\CC^I\tens\sB)$ for some \cst-algebra $\sB$. Then for each $a\in{O}$ there exist elements $\{q_{x,a}\}_{x\in{I}}$ such that
\[
\Psi(e_a)=\sum_{x\in{I}}e_x\tens{q_{x,a}},\qqquad{a}\in{O}.
\]
Clearly each $q_{x,a}$ can be written as $(\delta_x\tens\id)\Psi(e_a)$, where $\delta_x$ is the evaluation functional on the algebra $\CC^I$ and one easily sees that each $q_{x,a}\in\M(\sB)$ is a projection. Hence, by definition of $\sA(\cE_{I,O})$, there exists a unique unital $*$-homomorphism $\Theta:\sA(\cE_{I,O})\to\M(\sB)$ such that $\Theta(p_{x,a})=q_{x,a}$ for all $(a,x)\in{I}\times{O}$. Obviously $\Psi=(\id\tens\Theta)\comp\Phi_{\cE_{I,O}}$.
\end{proof}

Theorem \ref{univAE} says that the morphism $\Phi_{\cE_{I,O}}\in\Mor(\CC^O,\CC^I\tens\sA(\cE_{I,O}))$ has the universal property of the \emph{quantum family of all maps from $I$ to $O$} as defined in \cite[Definition 3.1(2)]{qfam}.

\begin{corollary}
The \cst-algebra $\sA(\cE_{I,O})$ is canonically isomorphic to the \cst-algebra of continuous functions on the quantum space of all maps from $I$ to $O$. In particular $\sA(\cE_{I,O})$ is isomorphic to the free product $\underbrace{\CC^O*\dotsm*\CC^O}_{|I|}$ of $|I|$ copies of the abelian \cst-algebra $\CC^O$.
\end{corollary}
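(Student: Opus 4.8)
The plan is to read off the first assertion directly from Theorem \ref{univAE} and to prove the second by exhibiting a pair of mutually inverse morphisms built from the universal property of $\sA(\cE_{I,O})$ and that of the free product.

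For the first sentence, observe that \cite[Definition 3.1(2)]{qfam} defines the \cst-algebra of continuous functions on the quantum space of all maps from $I$ to $O$ to be precisely a \cst-algebra carrying a morphism into $\CC^I\tens(-)$ with the universal property stated in Theorem \ref{univAE}. A universal object of this kind is unique up to a canonical isomorphism intertwining the universal morphisms, so Theorem \ref{univAE} identifies $\sA(\cE_{I,O})$, together with $\Phi_{\cE_{I,O}}$, with that algebra; this is the canonical isomorphism claimed.

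For the second sentence I would first specialise the defining relations of $\sA(\cE_{I,O})$ to the easiest game. By definition of $\cE_{I,O}$ one has $\lambda(a,b,x,y)=1$ whenever $x\neq y$, so the relation $p_{x,a}p_{y,b}=\lambda(a,b,x,y)p_{x,a}p_{y,b}$ is automatically satisfied and imposes no constraint when $x\neq y$; for $x=y$ it reads $p_{x,a}p_{x,b}=\delta_{a,b}p_{x,a}p_{x,b}$, i.e.~orthogonality of $p_{x,a}$ and $p_{x,b}$ for $a\neq b$, which is already forced by $\sum_{a\in O}p_{x,a}=\I$ via Lemma \ref{commpij}. Thus the only surviving data are, for each fixed $x\in I$, a family $\{p_{x,a}\}_{a\in O}$ of projections summing to $\I$, with no relation coupling different values of $x$, and this is exactly the defining data of the free product.

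Both $\sA(\cE_{I,O})$ and the (unital) free product are unital, so I may work with unital $*$-homomorphisms throughout. Writing $j_x\colon\CC^O\to\CC^O*\dotsm*\CC^O$ for the canonical embedding of the copy labelled by $x\in I$, the projections $j_x(e_a)$ sum to $\I$ for each $x$ by the remark after Lemma \ref{commpij}. The assignment $p_{x,a}\mapsto j_x(e_a)$ respects the relations of the previous paragraph—the sum relation because $\sum_{a\in O}j_x(e_a)=j_x(\I)=\I$, and there is nothing else to verify—so the universal property of $\sA(\cE_{I,O})$ produces a morphism $\alpha$ with $\alpha(p_{x,a})=j_x(e_a)$. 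Conversely, for each $x$ the family $\{p_{x,a}\}_{a\in O}$ of projections summing to $\I$ determines a morphism $\CC^O\to\sA(\cE_{I,O})$ sending $e_a$ to $p_{x,a}$, and the universal property of the free product assembles these into a single morphism $\beta$ with $\beta\bigl(j_x(e_a)\bigr)=p_{x,a}$. Evaluating on generators gives $\beta\bigl(\alpha(p_{x,a})\bigr)=p_{x,a}$ and $\alpha\bigl(\beta(j_x(e_a))\bigr)=j_x(e_a)$, and since the $p_{x,a}$ generate $\sA(\cE_{I,O})$ while the $j_x(e_a)$ generate the free product, $\alpha$ and $\beta$ are mutually inverse. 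The one point needing genuine care is the vacuity of the relations for $x\neq y$: this is precisely what turns a constrained product into the free product, and everything else is a routine application of the two universal properties.
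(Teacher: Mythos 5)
Your proof is correct, but it takes a genuinely different route from the paper's. The paper proves the second statement by equipping the free product $\sC=\underbrace{\CC^O*\dotsm*\CC^O}_{|I|}$ with the morphism $\Lambda(v)=\sum_{x\in{I}}e_x\tens\iota_x(v)$ and showing, via the universal property of the free product applied to the family $\bigl\{(\delta_x\tens\id)\comp\Psi\bigr\}_{x\in{I}}$, that $(\sC,\Lambda)$ enjoys exactly the universal property established for $(\sA(\cE_{I,O}),\Phi_{\cE_{I,O}})$ in Theorem \ref{univAE}; the isomorphism then follows from uniqueness of universal objects and automatically intertwines $\Phi_{\cE_{I,O}}$ with $\Lambda$. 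You instead compare presentations: you observe that for the easiest game the relations $p_{x,a}p_{y,b}=\lambda(a,b,x,y)p_{x,a}p_{y,b}$ are vacuous for $x\neq{y}$ and, for $x=y$, are already consequences of $\sum_{a\in{O}}p_{x,a}=\I$ by Lemma \ref{commpij}, so that $\sA(\cE_{I,O})$ has precisely the presentation of the unital free product, and you then assemble mutually inverse morphisms from the two universal properties. This is in fact the alternative the paper itself records in the remark immediately following the corollary: in the construction of \cite{HMPS} the algebra $\sA(\cG)$ is a quotient of the free product by an ideal, and your computation is exactly the verification that this ideal is zero for $\cG=\cE_{I,O}$. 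The paper's route buys uniformity (it never inspects the relations and yields the isomorphism in the form $\Lambda=(\id\tens\Gamma)\comp\Phi_{\cE_{I,O}}$, which is what ``canonically isomorphic'' refers to), while yours buys a concrete, self-contained identification at the level of generators and relations that makes transparent why no constraint couples distinct inputs. One small addendum worth making explicit in your version: since the statement asserts a canonical isomorphism, note that your $\alpha$ does intertwine the universal morphisms, i.e.\ $(\id\tens\alpha)\comp\Phi_{\cE_{I,O}}=\Lambda$, which is immediate by evaluating both sides on the basis vectors $e_a$.
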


\begin{proof}
The first statement is true by definition, while the second is only a slight extension of the proof of \cite[Theorem 2.1]{qmqs}. Let $\sC=\underbrace{\CC^O*\dotsm*\CC^O}_{|I|}$ and for each $x\in{I}$ let $\iota_x$ be the embedding $\CC^O\hookrightarrow\sC$ onto the copy of $\CC^O$ corresponding to $x$.

Define $\Lambda:\CC^O\to\CC^I\tens\sC$ by
\[
\Lambda(v)=\sum_{x\in{I}}e_x\tens\iota_x(v),\qqquad{v}\in\CC^O
\]
(existence of such a $*$-homomorphism is proved along the lines indicated in the introduction).

Now let us take $\Psi\in\Mor(\CC^O,\CC^I\tens\sB)$. Then $\bigl\{(\delta_x\tens\id)\comp\Psi\bigr\}_{x\in{I}}$ is a family of unital $*$-homomorphisms $\CC^O\to\M(\sB)$ and so, by the universal property of the free product, there exists a unique unital $*$-homomorphism $\Theta:\sC\to\M(\sB)$ such that
\[
(\delta_x\tens\id)\comp\Psi=\Theta\comp\iota_x,\qqquad{x}\in{I}
\]
and hence $\Psi=(\id\tens\Theta)\comp\Lambda$. In other words $(\sC,\Lambda)$ has the universal property we have shown for $(\sA(\cE_{I,O}),\Phi_{\cE_{I,O}})$ in Theorem \ref{univAE}. It follows that there is an isomorphism $\Gamma:\sA(\cE_{I,O})\to\sC$ such that $\Lambda=(\id\tens\Gamma)\Phi_{\cE_{I,O}}$.
\end{proof}

\begin{remark}
The proof above derives the isomorphism $\sA(\cE_{I,O})\cong\underbrace{\CC^O*\dotsm*\CC^O}_{|I|}$ from the universal property of $(\sA(\cE_{I,O}),\Phi_{\cE_{I,O}})$, but it can just as well be proved using the construction of $\sA(\cG)$ for $\cG=\cE_{I,O}$ given in \cite{HMPS} Where $\sA(\cG)$ is defined as the quotient of the free product by an ideal which in case of $\cG=\cE_{I,O}$ is the zero ideal.
\end{remark}

In the case of a game $\cG$ more complicated than $\cE_{I,O}$ the corresponding \cst-algebra is still equipped with a map from $\CC^O$ to $\CC^I\tens\sA(\cG)$ which possesses a universal property. This universal property is, however, slightly more complicated to express. We have already used a couple of times the fact that given any \cst-algebra $\sB$ and any $\Psi\in\Mor(\CC^O,\CC^I\tens\sB)$ we can write the value of $\Psi$ on elements $\{e_a\}_{a\in{O}}$ of the standard basis $\CC^O$ as
\[
\Psi(e_a)=\sum_{x\in{I}}e_x\tens{q_{x,a}},\qqquad{a}\in{O}
\]
(with $\{e_x\}_{x\in{I}}$ the standard basis of $\CC^I$) and clearly each $q_{x,a}$ is a projection in $\M(\sB)$. Obviously, if the projections $\{q_{x,a}\}_{x\in{I},a\in{O}}$ satisfy
\[
q_{x,a}q_{y,b}=\lambda(a,b,x,y)q_{x,a}q_{y,b},\qqquad{x,y}\in{I},\:a,b\in{O}
\]
then by the universal property of $\sA(\cG)$ there exists a unique $\Theta\in\Mor(\sA(\cG),\sB)$ mapping $p_{x,a}$ onto $q_{x,a}$ for all $(a,x)\in{I}\times{O}$. Furthermore $\Psi=(\id\tens\Theta)\comp\Phi_\cG$, where $\Phi_\cG\in\Mor(\CC^O,\CC^I\tens\sA(\cG))$ is defined by
\[
\Phi(e_a)=\sum_{x\in{I}}e_x\tens{p_{x,a}},\qqquad{a}\in{O}.
\]
(note that the last map can be expressed as the composition $(\id\tens\pi_\cG)\comp\Psi_{\cE_{I,O}}$, where $\pi_\cG$ is the quotient map from $\sA(\cE_{I,O})$ to $\sA(\cG)$).

As noted in the proof of Theorem \ref{univAE}, for any $\Psi\in\Mor(\CC^O,\CC^I\tens\sB)$ the element $q_{x,a}$ as above can be described as $(\delta_x\tens\id)\Psi(e_a)$, where $\delta_x$ is the evaluation functional on $\CC^I$ treated in a natural way as the algebra of all functions $O\to\CC$. Using this and the leg-numbering notation (see e.g.~\cite[p.~253]{qE2}) we can express the universal property of $(\sA(\cG),\Phi_\cG)$ as follows:

\begin{proposition}
Given a \cst-algebra $\sB$ and $\Psi\in\Mor(\CC^O,\CC^I\tens\sB)$ such that
\[
(\delta_x\tens\delta_y\tens\id)\bigl((\Psi(e_a)_{13}\Psi(e_b)_{23})\bigr)=\lambda(a,b,x,y)(\delta_x\tens\delta_y\tens\id)\bigl((\Psi(e_a)_{13}\Psi(e_b)_{23})\bigr)
\]
for all $x,y\in{I}$ and $a,b\in{O}$ there exists a unique $\Theta\in\Mor(\sA(\cG),\sB)$ such that $\Psi=(\Theta\tens\id)\comp\Phi_\cG$.
\end{proposition}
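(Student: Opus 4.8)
The plan is to recognise the hypothesis on $\Psi$ as the defining relations of $\sA(\cG)$ written in leg-numbering notation, after which the universal property of $\sA(\cG)$ produces $\Theta$ directly. First I would record, exactly as in the proof of Theorem~\ref{univAE}, that writing
\[
\Psi(e_a)=\sum_{x\in{I}}e_x\tens{q_{x,a}},\qqquad q_{x,a}=(\delta_x\tens\id)\Psi(e_a)\in\M(\sB),
\]
exhibits each $q_{x,a}$ as a projection. Moreover, since $\sum_{a\in{O}}e_a=\I$ in $\CC^O$ and $\Psi$ is a unital morphism, applying $\delta_x\tens\id$ to $\Psi(\I)=\I$ gives $\sum_{a\in{O}}q_{x,a}=\I$ for every $x\in{I}$.

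The key step is to unravel the leg-numbering condition. Embedding $\Psi(e_a)$ and $\Psi(e_b)$ into $\CC^I\tens\CC^I\tens\sB$ in the legs $13$ and $23$ and multiplying yields
\[
\Psi(e_a)_{13}\,\Psi(e_b)_{23}=\sum_{x,y\in{I}}e_x\tens{e_y}\tens{q_{x,a}q_{y,b}},
\]
so that the slice $\delta_x\tens\delta_y\tens\id$, using $\delta_x(e_{x'})=\delta_{x,x'}$, returns exactly $q_{x,a}q_{y,b}$. Hence the hypothesis says precisely
\[
q_{x,a}q_{y,b}=\lambda(a,b,x,y)\,q_{x,a}q_{y,b},\qqquad{x,y}\in{I},\:{a,b}\in{O},
\]
which, together with $\sum_{a\in{O}}q_{x,a}=\I$, is the complete list of relations imposed on the generators $p_{x,a}$ of $\sA(\cG)$.

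With this identification the universal property of $\sA(\cG)$ supplies a unique $\Theta\in\Mor(\sA(\cG),\sB)$ with $\Theta(p_{x,a})=q_{x,a}$ for all $x\in{I}$, $a\in{O}$. I would then check the factorisation on the generators of $\CC^O$: since
\[
(\id\tens\Theta)\bigl(\Phi_\cG(e_a)\bigr)=\sum_{x\in{I}}e_x\tens\Theta(p_{x,a})=\sum_{x\in{I}}e_x\tens{q_{x,a}}=\Psi(e_a),
\]
we obtain $\Psi=(\id\tens\Theta)\comp\Phi_\cG$, with $\Theta$ acting on the second tensor leg. Uniqueness is then immediate, because any morphism satisfying this factorisation must send $p_{x,a}$ to $(\delta_x\tens\id)\Psi(e_a)=q_{x,a}$, and the $p_{x,a}$ generate $\sA(\cG)$.

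I do not expect a genuine obstacle, as the statement is essentially the leg-numbering rephrasing of the universal property already discussed before the proposition. The only points needing a little care are the standard ones settled in Theorem~\ref{univAE}: that each slice $q_{x,a}$ is honestly a projection, and that all of this takes place in $\M(\sB)$ rather than $\sB$, so that the slice maps $\delta_x\tens\id$ are to be read as the canonical extensions of morphisms to the multiplier level.
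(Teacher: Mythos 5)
Your proposal is correct and follows exactly the route the paper intends: the paper's proof is simply ``Obvious,'' relying on the discussion immediately preceding the proposition, which already identifies $q_{x,a}=(\delta_x\tens\id)\Psi(e_a)$ as projections satisfying the defining relations of $\sA(\cG)$ and invokes the universal property -- your write-up just makes the leg-numbering unpacking explicit. You also correctly write the factorisation as $\Psi=(\id\tens\Theta)\comp\Phi_\cG$, silently repairing what is evidently a typo ($\Theta\tens\id$) in the statement itself.
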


\begin{proof}
Obvious.
\end{proof}

\begin{remark}\label{genwv}
It is immediate from the definitions of $\sA(\cG)$ and $\Phi_\cG$ that the \cst-algebra $\sA(\cG)$ is generated by the set
\[
\bigl\{(\omega\tens\id)\Phi_\cG(v)\st{v}\in\CC^O,\:\omega\in(\CC^I)^*\bigr\}.
\]
\end{remark}

\begin{example}\label{grHom}
One of the many examples of a two-person finite input-output game is the ``graph homomorphism game''. We are given two finite graphs $G_1=(V_1,E_1)$ and $G_2=(V_2,E_2)$ and we put $I=V_1$, $O=V_2$. Then define
\[
\lambda(a,b,x,y)=\begin{cases}
0&(x,y)\in{E_1},\:(a,b)\not\in{E_2},\\
1&\text{else}.
\end{cases}
\]
The interpretation of this game is that Alice and Bob win if they manage to convince the referee that they have come up with a graph homomorphism $G_1\to{G_2}$ and given respective inputs $x,y\in{V_1}$ their answers $a,b\in{V_2}$ are the images of $x$ and $y$ under the presumed homomorphism.

It is not difficult to see that the ``classical points'' (or \emph{Gelfand spectrum}) of $\sA(\cG)$ is in a natural bijection with the set of all graph homomorphisms $G_1\to{G_2}$ (cf.~\cite[Theorem 4.4]{qfam}).
\end{example}

\section{Synchronous games and quantum semigroups}\label{SynchGamesQS}

The \cst-algebras associated with some synchronous games posses additional interesting properties -- particularly in the special case when the input and output sets are the same and the rules of the game satisfy an additional condition. Therefore in this section we will assume that $O=I$ and that the rules -- now given by a function $\lambda:I^4\to\{0,1\}$ -- satisfy
\begin{equation}\label{star}
\Bigl(\;\lambda(i,j,k,l)=0\;\Bigr)\Longrightarrow\Bigl(\;\forall\;{r,s}\in{I}\:\:\lambda(i,j,r,s)\lambda(r,s,k,l)=0\;\Bigr),\qqquad{i,j,k,l}\in{I}.
\end{equation}
Note that the ``graph homomorphism game'' of Example \ref{grHom} satisfies this condition (in case $G_1=G_2$, of course). Indeed, denoting by $E$ the set of edges of $G=G_1=G_2$ we have $\lambda(i,j,k,l)=0$ if and only if $(k,l)\in{E}$ and $(i,j)\not\in{E}$. Then if $(r,s)$ is any pair of vertices of $G$ then either $(r,s)\in{E}$, in which case $\lambda(i,j,r,s)=0$ or $(r,s)\not\in{E}$, in which case $\lambda(r,s,k,l)=0$.

\begin{theorem}\label{semi}
Let $\cG$ be a synchronous game with $O=I$ satisfying condition \eqref{star}. Then
\begin{enumerate}
\item\label{semi1} there exists a unique $\Delta_\cG\in\Mor(\sA(\cG),\sA(\cG)\tens\sA(\cG))$ such that
\begin{equation}\label{PhiDel}
(\Phi_\cG\tens\id)\comp\Phi_\cG=(\id\tens\Delta_\cG)\comp\Phi_\cG,
\end{equation}
\item\label{semi2} $\Delta_\cG$ is coassociative: $(\Delta_\cG\tens\id)\comp\Delta_\cG=(\id\tens\Delta_\cG)\comp\Delta_\cG$ and consequently endows the quantum space underlying $\sA(\cG)$ with the structure of a compact quantum semigroup; moreover $\Phi_\cG$ is an action of this quantum semigroup on $\CC^I$;
\item\label{semi3} if $\mathbb{S}$ is a quantum semigroup with an action $\Psi\in\Mor(\CC^I,\CC^I\tens\C_0(\mathbb{S}))$ which satisfies the condition
\[
(\delta_k\tens\delta_l\tens\id)\bigl((\Psi(e_i)_{13}\Psi(e_j)_{23})\bigr)=\lambda(i,j,k,l)(\delta_k\tens\delta_l\tens\id)\bigl((\Psi(e_i)_{13}\Psi(e_j)_{23})\bigr)
\]
for all $i,j,k,l\in{I}$ then the unique $\Theta\in\Mor(\sA(\cG),\C(\mathbb{S}))$ such that $\Psi=(\id\tens\Theta)\comp\Phi_\cG$ satisfies $\Delta_\mathbb{S}\comp\Theta=(\Theta\tens\Theta)\comp\Delta_\cG$.
\end{enumerate}
\end{theorem}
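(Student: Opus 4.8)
The plan is to construct $\Delta_\cG$ through the universal property of $\sA(\cG)$, after reading off from \eqref{PhiDel} what its values on the generators must be. Expanding the left-hand side gives
\[
(\Phi_\cG\tens\id)\Phi_\cG(e_a)=\sum_{x,y\in I}e_y\tens p_{y,x}\tens p_{x,a},
\]
whereas the right-hand side equals $\sum_{y}e_y\tens\Delta_\cG(p_{y,a})$; comparing coefficients of the linearly independent $e_y$ forces $\Delta_\cG(p_{y,a})=\sum_{x}p_{y,x}\tens p_{x,a}$. Accordingly I would set $P_{y,a}:=\sum_{x}p_{y,x}\tens p_{x,a}\in\sA(\cG)\tens\sA(\cG)$ and produce $\Delta_\cG$ by verifying that the family $\{P_{y,a}\}$ obeys the defining relations of $\sA(\cG)$; the universal property then delivers a morphism $\Delta_\cG$ (unital, hence valued in $\sA(\cG)\tens\sA(\cG)$ itself) with $\Delta_\cG(p_{y,a})=P_{y,a}$, which is exactly \eqref{PhiDel} read on generators. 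Uniqueness is immediate: by Remark \ref{genwv} the slices of $\Phi_\cG$ generate $\sA(\cG)$, so \eqref{PhiDel} determines $\Delta_\cG$ on a generating set.

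The verification has three parts. Self-adjointness of $P_{y,a}$ is clear, and for idempotency I would use $\sum_{x}p_{y,x}=\I$ together with Lemma \ref{commpij} to get $p_{y,x}p_{y,x'}=\delta_{x,x'}p_{y,x}$, whence $P_{y,a}^2=\sum_{x}p_{y,x}\tens p_{x,a}=P_{y,a}$; the relation $\sum_{a}P_{y,a}=\I$ follows by summing $\sum_{a}p_{x,a}=\I$ in the second leg. The main obstacle, and the only point where condition \eqref{star} is used, is the orthogonality relation. Writing $P_{y,a}P_{z,b}=\sum_{r,s}p_{y,r}p_{z,s}\tens p_{r,a}p_{s,b}$, a summand survives only when $\lambda(r,s,y,z)=1$ and $\lambda(a,b,r,s)=1$; if $\lambda(a,b,y,z)=0$, then \eqref{star} applied to $(i,j,k,l)=(a,b,y,z)$ gives $\lambda(a,b,r,s)\lambda(r,s,y,z)=0$ for all $r,s$, so every summand vanishes and $P_{y,a}P_{z,b}=0$. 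This is precisely the calculation for which \eqref{star} is tailored, and it is the heart of part~(\ref{semi1}).

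For part~(\ref{semi2}) coassociativity is a one-line computation on generators: using the formula for $\Delta_\cG$ twice, both $(\Delta_\cG\tens\id)\Delta_\cG(p_{y,a})$ and $(\id\tens\Delta_\cG)\Delta_\cG(p_{y,a})$ reduce to the common expression $\sum_{r,s}p_{y,r}\tens p_{r,s}\tens p_{s,a}$, so they agree on a generating set and hence everywhere. As $\sA(\cG)$ is unital, $(\sA(\cG),\Delta_\cG)$ is then a compact quantum semigroup, and the assertion that $\Phi_\cG$ is an action is simply \eqref{PhiDel}.

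For part~(\ref{semi3}) I would first obtain $\Theta$ from the universal property of $(\sA(\cG),\Phi_\cG)$ (the preceding Proposition): writing $\Psi(e_a)=\sum_{x}e_x\tens q_{x,a}$, the hypothesis on $\Psi$ is exactly the relation $q_{k,i}q_{l,j}=\lambda(i,j,k,l)q_{k,i}q_{l,j}$, so $\Theta$ exists with $\Theta(p_{x,a})=q_{x,a}$. It then suffices to check $\Delta_\mathbb{S}\comp\Theta=(\Theta\tens\Theta)\comp\Delta_\cG$ on the generators $p_{y,a}$. Expanding the action axiom $(\Psi\tens\id)\Psi=(\id\tens\Delta_\mathbb{S})\Psi$ exactly as in part~(\ref{semi1}) yields $\Delta_\mathbb{S}(q_{y,a})=\sum_{x}q_{y,x}\tens q_{x,a}$, while $(\Theta\tens\Theta)\Delta_\cG(p_{y,a})=\sum_{x}\Theta(p_{y,x})\tens\Theta(p_{x,a})=\sum_{x}q_{y,x}\tens q_{x,a}$; the two agree. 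The only care needed here is bookkeeping of the indices on each leg, and there is no analytic subtlety, since everything is unital and the identities are verified on generators.
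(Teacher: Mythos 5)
Your proposal is correct and takes essentially the same route as the paper: the crucial step --- that $P_{y,a}P_{z,b}=\sum_{r,s}\lambda(r,s,y,z)\lambda(a,b,r,s)\,p_{y,r}p_{z,s}\tens p_{r,a}p_{s,b}=0$ whenever $\lambda(a,b,y,z)=0$, by condition \eqref{star} --- is exactly the paper's computation with the elements $Q_{k,i}=\sum_r p_{k,r}\tens p_{r,i}$, and your uniqueness argument via Remark \ref{genwv} matches the paper's use of the universal property. The only cosmetic difference is that you verify coassociativity and the identity $\Delta_{\mathbb{S}}\comp\Theta=(\Theta\tens\Theta)\comp\Delta_\cG$ directly on generators using the explicit formula $\Delta_\cG(p_{y,a})=\sum_x p_{y,x}\tens p_{x,a}$, whereas the paper argues abstractly by iterating \eqref{PhiDel}, slicing with functionals $\omega\in(\CC^I)^*$ and invoking Remark \ref{genwv}; both arguments are sound and of the same depth.
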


\begin{proof}
Ad \eqref{semi1}. Consider the \cst-algebra $\sB=\sA(\cG)\tens\sA(\cG)$ and $\Psi\in\Mor(\CC^O,\CC^I\tens\sB)$ defined as $\Psi=(\Phi_\cG\tens\id)\comp\Phi_\cG$. For $i,j\in{I}$ we have
\[
\Psi(e_i)=\sum_{r\in{I}}\Phi_\cG(e_r)\tens{p_{r,i}}=\sum_{r,k\in{I}}e_k\tens{p_{k,r}}\tens{p_{r,i}}
=\sum_{k\in{I}}e_k\tens{Q_{k,i}},
\]
where $Q_{k,i}=\sum\limits_{r\in{I}}p_{k,r}\tens{p_{r,i}}$. Then using the defining relations of $\sA(\cG)$ we find that
\[
Q_{k,i}Q_{l,j}=\sum_{r,s\in{I}}p_{k,r}p_{l,s}\tens{p_{r,i}p_{s,j}}
=\sum_{r,s\in{I}}\lambda(r,s,k,l)\lambda(i,j,r,s)p_{k,r}p_{l,s}\tens{p_{r,i}p_{s,j}}=0
\]
due to condition \eqref{star}. It follows that there exists a unique $\Delta_\cG\in\Mor(\sA(\cG),\sA(\cG)\tens\sA(\cG))$ such that $\Psi=(\id\tens\Delta_\cG)\comp\Phi_\cG$ which is exactly \eqref{PhiDel}.

Ad \eqref{semi2}. Only the first statement requires a proof. Using repeatedly property \eqref{PhiDel} we obtain
\[
\bigl((\id\tens[(\Delta_\cG\tens\id)\comp\Delta_\cG]\bigr)\comp\Phi_\cG
=(\Phi_\cG\tens\id\tens\id)\comp(\Phi_\cG\tens\id)\comp\Phi_\cG
=\bigl((\id\tens[(\id\tens\Delta_\cG)\comp\Delta_\cG]\bigr)\comp\Phi_\cG.
\]
Now applying both sides to any $v\in\CC^O$ and computing values of $(\omega\tens\id\tens\id\tens\id)$ of both sides (with arbitrary $\omega\in(\CC^I)^*$) we find that
\[
\bigl((\Delta_\cG\tens\id)\comp\Delta_\cG)\bigr)(x)=\bigl((\id\tens\Delta_\cG)\comp\Delta_\cG\bigr)(x)
\]
for elements $x$ of the form $x=(\omega\tens\id)\Phi_\cG(v)$. Equality $(\Delta_\cG\tens\id)\comp\Delta_\cG=(\id\tens\Delta_\cG)\comp\Delta_\cG$ follows now from Remark \ref{genwv}.

Ad \eqref{semi3}. This is an almost literal repetition of the proof of \cite[Theorem 4.7]{qfam}. One checks that $\Theta$ must satisfy
\[
\bigl(\id\tens[(\Theta\tens\Theta)\comp\Delta_\cG)]\bigr)\comp\Phi_\cG=\bigl(\id\tens[\Delta_\mathbb{S}\comp\Theta]\bigr)\comp\Phi_\cG
\]
and then uses Remark \ref{genwv}.
\end{proof}

Let $\mathbb{S}$ be a quantum semigroup. A \emph{counit} for $\mathbb{S}$ is a character $\eps$ of $\C_0(\mathbb{S})$ such that $(\eps\tens\id)\comp\Delta_\mathbb{S}=(\id\tens\eps)\comp\Delta_\mathbb{S}=\id$.

It is interesting that the quantum semigroup described by $(\sA(\cG),\Delta_\cG)$ might in some situations not admit a counit.

\begin{theorem}\label{coUnit}
Let $\cG$ be a synchronous game with $O=I$ and rules $\lambda$ satisfying condition \eqref{star}. Consider the following conditions:
\begin{enumerate}
\item\label{coUnit1} the the quantum semigroup described by $(\sA(\cG),\Delta_\cG)$ admits a counit,
\item\label{coUnit2} there exists a character $\eps$ of $\sA(\cG)$ such that $(\eps\tens\id)\comp\Phi_\cG=\id$,
\item\label{coUnit3} for each $i,j\in{I}$ we have $\lambda(i,j,i,j)=1$.
\end{enumerate}
Then \eqref{coUnit3} $\Leftrightarrow$ \eqref{coUnit2} $\Rightarrow$ \eqref{coUnit1}.
\end{theorem}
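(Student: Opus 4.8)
The plan is to establish \eqref{coUnit2}$\Leftrightarrow$\eqref{coUnit3} directly from the universal property of $\sA(\cG)$, and then to deduce \eqref{coUnit1} from \eqref{coUnit2} by substituting the character $\eps$ into the defining identity \eqref{PhiDel} of $\Delta_\cG$ and appealing to Remark \ref{genwv}.

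First I would unpack \eqref{coUnit2}, which asks that applying $\eps$ to the $\sA(\cG)$-leg of $\Phi_\cG$ recover the identity of $\CC^I$. Since $\Phi_\cG(e_a)=\sum_{x\in I}e_x\tens p_{x,a}$, this reads $\sum_{x\in I}\eps(p_{x,a})e_x=e_a$, so \eqref{coUnit2} holds precisely when $\eps(p_{x,a})=\delta_{x,a}$ for all $x,a\in I$. To obtain \eqref{coUnit2}$\Rightarrow$\eqref{coUnit3} I would apply such an $\eps$ to the $a=x$, $b=y$ instance of the orthogonality relations, namely $p_{x,x}p_{y,y}=\lambda(x,y,x,y)p_{x,x}p_{y,y}$; as $\eps(p_{x,x})=\eps(p_{y,y})=1$ this forces $\lambda(x,y,x,y)=1$, i.e.\ \eqref{coUnit3}. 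For the converse, assuming \eqref{coUnit3} I would check that $p_{x,a}\mapsto\delta_{x,a}$ respects the defining relations: the completeness relations $\sum_{a}p_{x,a}=\I$ become $\sum_{a}\delta_{x,a}=1$ and hold automatically, while $\delta_{x,a}\delta_{y,b}=\lambda(a,b,x,y)\delta_{x,a}\delta_{y,b}$ can fail only for $a=x$, $b=y$, where it holds exactly by \eqref{coUnit3}. The universal property of $\sA(\cG)$ then provides a character $\eps$ with $\eps(p_{x,a})=\delta_{x,a}$, which satisfies \eqref{coUnit2}.

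For \eqref{coUnit2}$\Rightarrow$\eqref{coUnit1} I must show that the character $\eps$ is a counit, i.e.\ $(\eps\tens\id)\comp\Delta_\cG=(\id\tens\eps)\comp\Delta_\cG=\id_{\sA(\cG)}$. Both equalities should come from applying $\eps$ to a single leg of the identity $(\Phi_\cG\tens\id)\comp\Phi_\cG=(\id\tens\Delta_\cG)\comp\Phi_\cG$, whose range lies in $\CC^I\tens\sA(\cG)\tens\sA(\cG)$. Composing both sides with $\id\tens\id\tens\eps$ sends the right-hand side to $\bigl(\id\tens[(\id\tens\eps)\comp\Delta_\cG]\bigr)\comp\Phi_\cG$ and the left-hand side to $\Phi_\cG\comp\bigl[(\id\tens\eps)\comp\Phi_\cG\bigr]=\Phi_\cG$ by \eqref{coUnit2}. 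Writing $\Phi_\cG=(\id\tens\id_{\sA(\cG)})\comp\Phi_\cG$ and evaluating $\omega\tens\id$ for arbitrary $\omega\in(\CC^I)^*$, I would conclude that $(\id\tens\eps)\comp\Delta_\cG$ and $\id_{\sA(\cG)}$ agree on every element $(\omega\tens\id)\Phi_\cG(v)$; since these generate $\sA(\cG)$ by Remark \ref{genwv}, this gives $(\id\tens\eps)\comp\Delta_\cG=\id$. Composing the same identity with $\id\tens\eps\tens\id$ instead, and again using \eqref{coUnit2} and Remark \ref{genwv}, yields $(\eps\tens\id)\comp\Delta_\cG=\id$ in the same way.

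The equivalence \eqref{coUnit2}$\Leftrightarrow$\eqref{coUnit3} is routine, amounting to reading the defining relations under a character. The step requiring genuine care is the leg bookkeeping in \eqref{coUnit2}$\Rightarrow$\eqref{coUnit1}: I must apply $\eps$ to the correct tensor slot so that contracting the third leg produces $(\id\tens\eps)\comp\Delta_\cG$ while contracting the second leg produces $(\eps\tens\id)\comp\Delta_\cG$, and then verify that in each case the left-hand side of \eqref{PhiDel} really does collapse back to $\Phi_\cG$ by \eqref{coUnit2}. The concluding passage from agreement on the range of $\Phi_\cG$ to equality of morphisms on all of $\sA(\cG)$ is exactly what Remark \ref{genwv} supplies, and it is the mechanism that makes the argument close.
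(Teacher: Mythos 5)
Your proposal is correct and follows essentially the same route as the paper: the equivalence \eqref{coUnit2}$\Leftrightarrow$\eqref{coUnit3} via the character $\eps(p_{x,a})=\delta_{x,a}$ and the universal property, and \eqref{coUnit2}$\Rightarrow$\eqref{coUnit1} by applying $\eps$ to one leg of \eqref{PhiDel}, collapsing the left-hand side to $\Phi_\cG$, slicing with $\omega\in(\CC^I)^*$, and invoking Remark \ref{genwv}. If anything you are slightly more explicit than the paper's closing display, which records only $(\eps\tens\id)\comp\Delta_\cG=(\id\tens\eps)\comp\Delta_\cG$, although its middle term $\Phi_\cG$ gives equality with $\id$ exactly as you state.
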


\begin{proof}
\eqref{coUnit3} $\Rightarrow$ \eqref{coUnit2}
If $\lambda(i,j,i,j)=1$ for each pair $(i,j)$ then the collection of numbers $\{\delta_{i,j}\}_{i,j\in{I}}$ satisfies
\begin{itemize}
\item $\sum\limits_{j\in{I}}\delta_{i,j}=1$ for all $i\in{I}$,
\item if $\lambda(i,j,k,l)=0$ then $\delta_{k,i}\delta_{l,j}=0$.
\end{itemize}
It follows that there exists a unital $*$-homomorphism $\eps:\sA(\cG)\to\CC$ (in other words, a character of $\sA(\cG)$) such that
\[
\eps(p_{i,j})=\delta_{i,j},\qqquad{i,j}\in{I}.
\]
Obviously $\eps$ satisfies $(\eps\tens\id)\comp\Phi_\cG=\id$.

\eqref{coUnit2} $\Rightarrow$ \eqref{coUnit3}. Clearly a character $\eps$ of $\sA(\cG)$ which satisfies $(\eps\tens\id)\comp\Phi_\cG=\id$ must assign the value $\delta_{i,j}$ to $p_{i,j}$, hence $\eps(p_{i,i})\eps(p_{j,j})=1$, so we cannot have $\lambda(i,j,i,j)=0$ for any $i,j$.

\eqref{coUnit2} $\Rightarrow$ \eqref{coUnit1}. It follows easily from the property $(\eps\tens\id)\comp\Phi_\cG=\id$ that
\[
\bigl(\id\tens[(\eps\tens\id)\comp\Delta_\cG]\bigr)\comp\Phi_\cG=\Phi_\cG
=\bigl(\id\tens[(\id\tens\eps)\comp\Delta_\cG]\bigr)\comp\Phi_\cG,
\]
so taking slices with $\omega\in(\CC^I)^*$ over the first leg and using Remark \ref{genwv} we obtain
\[
(\eps\tens\id)\comp\Delta_\cG=(\id\tens\eps)\comp\Delta_\cG.
\]
\end{proof}

\begin{remark}
\noindent
\begin{enumerate}
\item One can easily supplement Theorem \ref{coUnit} with the implication \eqref{coUnit1} $\Rightarrow$ \eqref{coUnit2} under the additional assumption that $\CC^I$ is generated by the set
\[
\bigl\{(\id\tens\nu)\Phi_\cG(v)\st{v}\in\CC^I,\:\nu\in\sA(\cG)^*\bigr\}.
\]
Indeed, if $\eps$ is a character of $\sA(\cG)$ such that $(\eps\tens\id)\comp\Delta_\cG=\id$ then
\[
\Phi_\cG=\bigl(\id\tens[(\eps\tens\id)\comp\Delta_\cG]\bigr)\comp\Phi_\cG=\Bigl([(\id\tens\eps)\comp\Phi_\cG]\tens\id\bigr)\comp\Phi_\cG,
\]
so slicing with $\nu\in\sA(\cG)^*$ over the last leg we obtain $(\id\tens\eps)\comp\Phi_\cG=\id$.
\item Note also that condition \eqref{coUnit3} of Theorem \ref{coUnit} holds for the ``graph homomorphism game'' of Example \ref{grHom} with $G_1=G_2$.
\end{enumerate}
\end{remark}

\section{Quantum groups}

If $\cG$ is the graph homomorphism game with $G_1=G_2=G$ as considered at the beginning of Section \ref{SynchGamesQS} then one can introduce a positive functional $\omega$ on $\CC^I$ (here $I=V$ is the set of vertices of $G$) corresponding to the the measure assigning mass $1$ to all points of $I$, i.e.~$\omega(e_i)=1$ for all $i\in{I}$. Clearly graph homomorphisms $G\to{G}$ preserving this measure are automorphisms of $G$.

Let $\mathbb{S}$ be a quantum semigroup acting on $I$ via $\Psi\in\Mor(\CC^I,\CC^I\tens\C_0(\mathbb{S}))$ and let us write
\[
\Psi(e_j)=\sum_{i\in{I}}e_i\tens{q_{i,j}},\qqquad{j}\in{I}.
\]
Then the condition that $\omega$ be preserved by the action $\Psi$ translates into relations
\[
\sum_{i\in{I}}q_{i,j}=\I,\qqquad{j}\in{I}
\]
on the elements $\{q_{i,j}\}_{i,j\in{I}}$.

Now if $\cG$ is a synchronous game with $O=I$ and rules satisfying \eqref{star} then adding analogous relations to the list defining $\sA(\cG)$ we obtain the universal quantum family of maps $I\to{I}$ which preserves $\omega$ and is compatible with $\lambda$.

\begin{proposition}\label{invw}
Let $\cG$ be a synchronous game with $O=I$. Let $\widetilde{\sA}(\cG)$ be the universal \cst-algebra generated by projections $\{\tilde{p}_{i,j}\}_{i,j\in{I}}$ such that
\begin{equation}\label{moreRels}
\begin{aligned}
\sum_{j\in{I}}\tilde{p}_{i,j}&=\I,&\qquad&i\in{I},\\
\sum_{i\in{I}}\tilde{p}_{i,j}&=\I,&\qquad&j\in{I},\\
\tilde{p}_{k,i}\tilde{p}_{l,j}&=\lambda(i,j,k,l)\tilde{p}_{k,i}\tilde{p}_{l,j},&\qquad&i,j,k,l\in{I}.
\end{aligned}
\end{equation}
Then there exists a unique $\widetilde{\Phi}_\cG\in\Mor(\CC^I,\CC^I\tens\widetilde{\sA}(\cG))$ such that for any \cst-algebra $\sB$ and any $\Psi\in\Mor(\CC^I,\CC^I\tens\sB)$ such that
\begin{equation}\label{compat}
\begin{array}{l}
(\delta_k\tens\delta_l\tens\id)\bigl((\Psi(e_i)_{13}\Psi(e_j)_{23})\bigr)\\
\qquad\qquad=\lambda(i,j,k,l)(\delta_k\tens\delta_l\tens\id)\bigl((\Psi(e_i)_{13}\Psi(e_j)_{23})\bigr),
\end{array}\qqquad{i,j,k,l}\in{I}
\end{equation}
and
\[
(\omega\tens\id)\Psi((v)=\omega(v)\I,\qqquad{v}\in\CC^I
\]
then there exists a unique $\Theta\in\Mor(\widetilde{\sA}(\cG),\sB)$ such that $\Psi=(\id\tens\Theta)\comp\widetilde{\Phi}_\cG$.

Moreover $\widetilde{\Phi}_\cG$ is a quantum family of invertible maps in the sense of \cite[Definition 3.1]{invert} preserving the state $\omega$ on $\CC^I$.
\end{proposition}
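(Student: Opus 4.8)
\section*{Proof proposal for Proposition \ref{invw}}

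The plan is to reuse the construction of Theorem \ref{univAE} verbatim, now feeding in the enlarged relation set \eqref{moreRels}, and then to read off the ``moreover'' clause from the magic unitary structure these relations impose. First I would produce $\widetilde{\Phi}_\cG$ by the recipe used throughout the paper: set
\[
\widetilde{\Phi}_\cG(e_j)=\sum_{i\in I}e_i\tens\tilde{p}_{i,j},\qqquad j\in I,
\]
and verify that $\{\widetilde{\Phi}_\cG(e_j)\}_{j\in I}$ is a family of projections in $\CC^I\tens\widetilde{\sA}(\cG)$ summing to $\I$, so that by the universal property of $\CC^I$ recalled in the introduction there is a unique morphism taking these values. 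The sum over $j$ is $\sum_i e_i\tens\bigl(\sum_j\tilde{p}_{i,j}\bigr)=\I$ by the first relation of \eqref{moreRels}; that each value is a projection and that distinct values are orthogonal follows because, for fixed $i$, the projections $\{\tilde{p}_{i,j}\}_j$ sum to $\I$, whence $\tilde{p}_{i,j}\tilde{p}_{i,j'}=\delta_{j,j'}\tilde{p}_{i,j}$ by Lemma \ref{commpij}.

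Next I would establish the universal property. Given $\sB$ and $\Psi$ as in the statement, write $\Psi(e_j)=\sum_i e_i\tens q_{i,j}$ with $q_{i,j}=(\delta_i\tens\id)\Psi(e_j)$, which are projections in $\M(\sB)$, and check that $\{q_{i,j}\}$ satisfies all three relations of \eqref{moreRels}. The first is automatic from $\Psi$ being a unital morphism, since $\sum_j q_{i,j}=(\delta_i\tens\id)\Psi(\I)=\I$. The second is precisely the state-preservation hypothesis: as $\sum_i\delta_i=\omega$ on $\CC^I$, we get $\sum_i q_{i,j}=(\omega\tens\id)\Psi(e_j)=\omega(e_j)\I=\I$. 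The third is the compatibility condition \eqref{compat}: unwinding the leg-numbering gives $(\delta_k\tens\delta_l\tens\id)\bigl(\Psi(e_i)_{13}\Psi(e_j)_{23}\bigr)=q_{k,i}q_{l,j}$, so \eqref{compat} reads $q_{k,i}q_{l,j}=\lambda(i,j,k,l)q_{k,i}q_{l,j}$. By the universal property of $\widetilde{\sA}(\cG)$ there is then a unique $\Theta\in\Mor(\widetilde{\sA}(\cG),\sB)$ with $\Theta(\tilde{p}_{i,j})=q_{i,j}$; checking on the basis $\{e_j\}$ gives $\Psi=(\id\tens\Theta)\comp\widetilde{\Phi}_\cG$, and uniqueness of $\Theta$ is forced since the $\tilde{p}_{i,j}$ generate $\widetilde{\sA}(\cG)$.

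For the ``moreover'' clause, state preservation is immediate, since $(\omega\tens\id)\widetilde{\Phi}_\cG(e_j)=\sum_i\tilde{p}_{i,j}=\I=\omega(e_j)\I$ by the second relation. The substance is invertibility, and the key observation is that \eqref{moreRels} makes the matrix $u=[\tilde{p}_{i,j}]_{i,j\in I}\in M_{|I|}(\widetilde{\sA}(\cG))$ a magic unitary: the two sum conditions together with Lemma \ref{commpij} yield both $\tilde{p}_{i,j}\tilde{p}_{i,j'}=\delta_{j,j'}\tilde{p}_{i,j}$ (orthogonality within rows) and $\tilde{p}_{i,j}\tilde{p}_{i',j}=\delta_{i,i'}\tilde{p}_{i,j}$ (orthogonality within columns). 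Using these one computes
\[
(uu^{t})_{i,k}=\sum_{j\in I}\tilde{p}_{i,j}\tilde{p}_{k,j}=\delta_{i,k}\I,
\]
and symmetrically $u^{t}u=\I$; since the entries are self-adjoint, $u^{t}=u^{*}$ and $u$ is unitary, in particular invertible. I expect the main obstacle to be purely bookkeeping against the precise wording of \cite[Definition 3.1]{invert}: one must confirm that being a quantum family of invertible maps there amounts to invertibility of this matrix, and, should the definition instead require the inverse family to itself be a quantum family of maps $I\to I$, one notes that $u^{t}=[\tilde{p}_{j,i}]$ is the matrix of the genuine morphism $e_j\mapsto\sum_i e_i\tens\tilde{p}_{j,i}$, well defined by the same computation with the roles of the two sum relations exchanged.
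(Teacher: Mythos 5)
Your proposal is correct, and apart from packaging it is the paper's own argument. The construction of $\widetilde{\Phi}_\cG$ is the same (the paper realizes it as $(\id\tens\tilde{\pi})\comp\Phi_\cG$, with $\tilde{\pi}:\sA(\cG)\to\widetilde{\sA}(\cG)$ the quotient map, which yields exactly the morphism you obtain directly from the universal property of $\CC^I$), and your verification of the universal property --- row sums from unitality, column sums from $\omega=\sum_i\delta_i$ and state preservation, the $\lambda$-relations from unwinding the leg numbering --- spells out in detail what the paper compresses into ``the elements $\{q_{i,j}\}$ satisfy relations analogous to \eqref{moreRels}''. The only divergence is the last step, where you hedge on the content of \cite[Definition 3.1]{invert}. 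As the paper's proof makes explicit, that definition amounts to the density of $\operatorname{span}\bigl\{\widetilde{\Phi}_\cG(v)(\I\tens{a})\st{v}\in\CC^I,\:a\in\widetilde{\sA}(\cG)\bigr\}$ in $\CC^I\tens\widetilde{\sA}(\cG)$, and your magic-unitary identity resolves this ``bookkeeping'' in one line: your computation $(uu^{t})_{i,k}=\sum_j\tilde{p}_{i,j}\tilde{p}_{k,j}=\delta_{i,k}\I$ is precisely the paper's identity $\sum_{j}\widetilde{\Phi}_\cG(e_j)(\I\tens\tilde{p}_{i,j})=e_i\tens\I$ read entrywise, and multiplying it on the right by $\I\tens{a}$ puts every $e_i\tens{a}$ in the span, which is the density. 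In particular the extra unitarity $u^{t}u=\I$ and your observation that $u^{t}$ is the matrix of a genuine morphism (the candidate inverse family), while correct and reassuring, are not needed: the paper uses only the single column-orthogonality computation, exactly the one you performed.
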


\begin{proof}
The \cst-algebra $\widetilde{\sA}(\cG)$ is clearly a quotient of $\sA(\cG)$ and writing $\tilde{\pi}$ for the quotient map we have $\tilde{p}_{i,j}=\tilde{\pi}(p_{i,j})$ for all $i,j\in{I}$. We can define $\widetilde{\Phi}_\cG$ as the composition of $(\id\tens\tilde{\pi})\comp\Phi_\cG$. Then if $(B,\Psi)$ are as in the statement of the theorem then, as we have seen before, we have
\[
\Psi(e_j)=\sum_{i\in{I}}e_i\tens{q_{i,j}},\qqquad{j}\in{I}
\]
and the elements $\{q_{i,j}\}_{i,j\in{I}}$ satisfy relations analogous to \eqref{moreRels}. Therefore there exists a unique $\Theta\in\Mor(\widetilde{\sA}(\cG),\sB)$ sending $\tilde{p}_{i,j}$ to $q_{i,j}$ for all $i,j\in{I}$. This is precisely the condition $\Psi=(\id\tens\Theta)\comp\widetilde{\Phi}_\cG$.

Note that by orthogonality of $\{\tilde{p}_{k,j}\}_{k\in{I}}$ for each $j$ we have for any fixed $i\in{I}$
\begin{align*}
\sum_{j\in{I}}\widetilde{\Phi}_\cG(e_j)(\I\tens\tilde{p}_{i,j})
&=\sum_{j\in{I}}\sum_{k\in{I}}(e_k\tens\tilde{p}_{k,j})(\I\tens\tilde{p}_{i,j})\\
&=\sum_{j\in{I}}\sum_{k\in{I}}e_k\tens\delta_{i,k}\tilde{p}_{i,j}=e_i\tens\sum_{j\in{I}}\tilde{p}_{i,j}=e_i\tens\I.
\end{align*}
It follows that the subspace
\[
\operatorname{span}\bigl\{\widetilde{\Phi}_\cG(v)(\I\tens{a}){\bigl.\st\bigr.}{v}\in\CC^I,\:a\in\widetilde{\sA}(\cG)\bigr\}
\]
is dense in $\CC^I\tens\widetilde{\sA}(\cG)$, i.e.~$\widetilde{\Phi}_\cG$ is a quantum family of invertible maps. The fact that $\widetilde{\Phi}_\cG$ preserves $\omega$ is obvious.
\end{proof}

We will say that a quantum family of maps $\Psi\in\Mor(\CC^I,\CC^I\tens\sB)$ is \emph{compatible with the game $\cG$} if it satisfies condition \eqref{compat} of Proposition \ref{invw}. Clearly $(\widetilde{\sA}(\cG),\widetilde{\Phi}_\cG)$ is a universal quantum family of maps which are compatible with $\cG$ and preserve $\omega$. It is also easy to see that in case the rules $\lambda$ of $\cG$ satisfy \eqref{star} then this universal family has additional structure and properties. We formulate these in the next theorem.

\begin{theorem}\label{cqgG}
Let $\cG$ be a synchronous game with $O=I$ and rules satisfying condition \eqref{star}. Then
\begin{enumerate}
\item there exists a unique $\widetilde{\Delta}_\cG\in\Mor(\widetilde{\sA}(\cG),\widetilde{\sA}(\cG)\tens\widetilde{\sA}(\cG))$ such that
\[
(\widetilde{\Phi}_\cG\tens\id)\comp\widetilde{\Phi}_\cG=(\id\tens\widetilde{\Delta}_\cG)\comp\widetilde{\Phi}_\cG,
\]
\item\label{cqgG2} $(\widetilde{\sA}(\cG),\widetilde{\Delta}_\cG)$ defines a compact quantum group $\GG$,
\item for any compact quantum group $\KK$ and any action of $\KK$ on $I$ given by
\[
\Psi\in\Mor(\CC^I,\CC^I\tens\C(\KK))
\]
which is compatible with the game $\cG$ and preserves the state $\omega$ there exists a unique $\Theta\in\Mor(\widetilde{\sA}(\cG),\C(\KK)\bigr)$ such that
\begin{equation}\label{ThetaPsi}
\Psi=(\id\tens\Theta)\comp\widetilde{\Phi}_\cG;
\end{equation}
moreover $\Theta$ is a morphism of compact quantum groups.
\end{enumerate}
\end{theorem}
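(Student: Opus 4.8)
The three assertions run parallel to what was already proved for $\sA(\cG)$, so the plan is to transport the arguments of Theorem \ref{semi} and Proposition \ref{invw} across the quotient $\tilde{\pi}\colon\sA(\cG)\to\widetilde{\sA}(\cG)$ and then to isolate the one genuinely new step, the passage from a quantum semigroup to a quantum group. For the first part I would feed $\Psi=(\widetilde{\Phi}_\cG\tens\id)\comp\widetilde{\Phi}_\cG$, viewed as an element of $\Mor(\CC^I,\CC^I\tens(\widetilde{\sA}(\cG)\tens\widetilde{\sA}(\cG)))$, into the universal property of Proposition \ref{invw}. Writing $\widetilde{\Phi}_\cG(e_j)=\sum_{k}e_k\tens\tilde{p}_{k,j}$ one gets $\Psi(e_i)=\sum_{k}e_k\tens\tilde{Q}_{k,i}$ with $\tilde{Q}_{k,i}=\sum_{r}\tilde{p}_{k,r}\tens\tilde{p}_{r,i}$, and the computation of $\tilde{Q}_{k,i}\tilde{Q}_{l,j}$ from the proof of Theorem \ref{semi} shows, via condition \eqref{star}, that this product vanishes whenever $\lambda(i,j,k,l)=0$; hence $\Psi$ is compatible with $\cG$. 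That $\Psi$ preserves $\omega$ follows by slicing the outer $\CC^I$ leg with $\omega$ and applying the $\omega$-invariance of $\widetilde{\Phi}_\cG$ twice. Proposition \ref{invw} then delivers the unique $\widetilde{\Delta}_\cG$ with $\Psi=(\id\tens\widetilde{\Delta}_\cG)\comp\widetilde{\Phi}_\cG$, and from it one reads off $\widetilde{\Delta}_\cG(\tilde{p}_{i,j})=\sum_{k}\tilde{p}_{i,k}\tens\tilde{p}_{k,j}$.

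Coassociativity of $\widetilde{\Delta}_\cG$ is then verbatim the coassociativity argument of Theorem \ref{semi}: one composes the two iterated coproducts with $\widetilde{\Phi}_\cG$, uses the defining relation of the previous paragraph repeatedly, slices the first leg, and concludes from the evident analogue of Remark \ref{genwv} --- valid because $\tilde{p}_{i,j}=(\delta_i\tens\id)\widetilde{\Phi}_\cG(e_j)$, so that $\widetilde{\sA}(\cG)$ is generated by $\{(\mu\tens\id)\widetilde{\Phi}_\cG(v)\st v\in\CC^I,\:\mu\in(\CC^I)^*\}$. This much produces a compact quantum semigroup; the real content, and the step I expect to be the main obstacle, is to supply the cancellation (density) conditions that distinguish a quantum group. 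The plan is to assemble the generators into the matrix $u=(\tilde{p}_{i,j})_{i,j\in I}$ over $\widetilde{\sA}(\cG)$ and to exploit the two sum relations of \eqref{moreRels}, which are exactly the feature absent from $\sA(\cG)$: for fixed $i$ the projections $\{\tilde{p}_{i,j}\}_{j\in I}$ are mutually orthogonal (Lemma \ref{commpij}) and sum to $\I$, and symmetrically for fixed $j$ the family $\{\tilde{p}_{i,j}\}_{i\in I}$. A short matrix computation then shows that $u$ is unitary and, interchanging rows and columns, that $u^{t}$ is unitary as well. Since $\widetilde{\Delta}_\cG(u_{ij})=\sum_{k}u_{ik}\tens u_{kj}$, the pair $(\widetilde{\sA}(\cG),u)$ meets Woronowicz's conditions for a compact matrix quantum group, the invertibility required there being exactly the unitarity of $u$ and $u^{t}$; this furnishes $\GG$. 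Equivalently, discarding the $\lambda$-relations from \eqref{moreRels} leaves precisely the defining relations of Wang's quantum permutation group from \cite{wang}, so $\widetilde{\sA}(\cG)$ is a quotient of it by an ideal that the first part shows to be compatible with the comultiplication, exhibiting $\GG$ as a compact quantum subgroup.

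For the last part, existence and uniqueness of $\Theta$ are immediate from Proposition \ref{invw} with $\sB=\C(\KK)$, since $\Psi$ is assumed compatible with $\cG$ and $\omega$-preserving. To see that $\Theta$ intertwines the comultiplications I would repeat the proof of Theorem \ref{semi}: combining the action identity $(\id\tens\Delta_\KK)\comp\Psi=(\Psi\tens\id)\comp\Psi$ with $\Psi=(\id\tens\Theta)\comp\widetilde{\Phi}_\cG$ and the defining relation of $\widetilde{\Delta}_\cG$, one checks that both $\bigl(\id\tens[\Delta_\KK\comp\Theta]\bigr)\comp\widetilde{\Phi}_\cG$ and $\bigl(\id\tens[(\Theta\tens\Theta)\comp\widetilde{\Delta}_\cG]\bigr)\comp\widetilde{\Phi}_\cG$ reduce to $(\Psi\tens\id)\comp\Psi$. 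Slicing the first leg with an arbitrary $\mu\in(\CC^I)^*$ and invoking the generation statement used above equates the two $*$-homomorphisms on a generating set of $\widetilde{\sA}(\cG)$, hence everywhere, giving $\Delta_\KK\comp\Theta=(\Theta\tens\Theta)\comp\widetilde{\Delta}_\cG$; this is precisely the assertion that $\Theta$ is a morphism of compact quantum groups. I expect no further obstacle here once the quantum-group structure of the second part is established.
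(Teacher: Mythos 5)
Your proposal is correct, and for parts (1) and (3) it is essentially the paper's own argument: the paper likewise feeds $(\widetilde{\Phi}_\cG\tens\id)\comp\widetilde{\Phi}_\cG$ into the universal property of Proposition \ref{invw} (verifying $\omega$-preservation by the same slicing computation, with compatibility supplied by the $Q_{k,i}Q_{l,j}$ computation already done in Theorem \ref{semi} under condition \eqref{star}), reads off $\widetilde{\Delta}_\cG(\tilde{p}_{i,j})=\sum_{k}\tilde{p}_{i,k}\tens\tilde{p}_{k,j}$ to get coassociativity, and proves part (3) via the universal property together with the intertwining argument of Theorem \ref{semi}\eqref{semi3}. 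The one genuine divergence is in part \eqref{cqgG2}. The paper verifies Woronowicz's definition of a compact quantum group directly, computing $\sum_{k}\widetilde{\Delta}_\cG(\tilde{p}_{i,k})(\I\tens\tilde{p}_{j,k})=\tilde{p}_{i,j}\tens\I$ and $\sum_{k}(\tilde{p}_{k,i}\tens\I)\widetilde{\Delta}_\cG(\tilde{p}_{k,j})=\I\tens\tilde{p}_{i,j}$ to obtain density of the two cancellation spans (offering as an alternative the general theory of quantum families of invertible maps from \cite{invert}); you instead package the generators into the matrix $u=(\tilde{p}_{i,j})_{i,j\in I}$, prove $u$ and $u^{t}$ unitary, and invoke the compact matrix quantum group axioms, or equivalently exhibit $\widetilde{\sA}(\cG)$ as a quotient of Wang's quantum permutation algebra by an ideal that part (1) shows is compatible with the comultiplication. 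These routes are equivalent in substance: the orthogonality computations behind unitarity of $u$ and $u^{t}$ (rows and columns of \eqref{moreRels} via Lemma \ref{commpij}) are exactly the calculations the paper displays for the density conditions, so no work is saved; what your version buys is making explicit the realization of $\GG$ as a quantum subgroup of Wang's quantum permutation group, which the paper asserts only in the introduction, at the mild cost of importing the CMQG formalism and the standard fact that quotients by Woronowicz ideals are again compact quantum groups, neither of which the paper's self-contained density check needs.
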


\begin{proof}
The quantum family $\widetilde{\Phi}_\cG\in\Mor(\CC^I,\CC^I\tens\widetilde{\sA}(\cG))$ preserves the state $\omega$ and hence so does $(\widetilde{\Phi}_\cG\tens\id)\comp\widetilde{\Phi}_\cG$:
\begin{align*}
(\omega\tens\id\tens\id)(\widetilde{\Phi}_\cG\tens\id)\widetilde{\Phi}_\cG(v)
&=\I\tens\bigl((\omega\tens\id)\widetilde{\Phi}_\cG(v)\bigr)\\
&=\omega(v)(\I\tens\I).
\end{align*}
Together with condition \eqref{star} this guarantees that $(\widetilde{\Phi}_\cG\tens\id)\comp\widetilde{\Phi}_\cG$ is of the form $(\id\tens\widetilde{\Delta}_\cG)\comp\widetilde{\Phi}_\cG$ for a unique $\widetilde{\Delta}_\cG\in\Mor(\widetilde{\sA}(\cG),\widetilde{\sA}(\cG)\tens\widetilde{\sA}(\cG))$. It is also easy to see that on generators $\{\tilde{p}_{i,j}\}$ it acts in the standard way:
\[
\widetilde{\Delta}_\cG(\tilde{p}_{i,j})=\sum_{k\in{I}}\tilde{p}_{i,k}\tens\tilde{p}_{k,j},\qqquad{i,j}\in{I}.
\]
Hence $\widetilde{\Delta}_\cG$ is coassociative. The fact that $(\widetilde{\sA}(\cG),\widetilde{\Delta}_\cG)$ defines a compact quantum group follows now either from general facts about quantum families of invertible maps (\cite[Section 4]{invert}), or the calculations
\begin{align*}
\sum_{k\in{I}}\widetilde{\Delta}_\cG(\tilde{p}_{i,k})(\I\tens\tilde{p}_{j,k})&=\sum_{k,l\in{I}}\tilde{p}_{i,l}\tens\tilde{p}_{l,k}\tilde{p}_{j,k}\\
&=\sum_{k,l\in{I}}\tilde{p}_{i,l}\tens\delta_{l,j}\tilde{p}_{j,k}=\sum_{k\in{I}}\tilde{p}_{i,j}\tens\tilde{p}_{j,k}=\tilde{p}_{i,j}\tens\I
\end{align*}
and
\begin{align*}
\sum_{k\in{I}}(\tilde{p}_{k,i}\tens\I)\widetilde{\Delta}_\cG(\tilde{p}_{k,j})&=\sum_{k,l\in{I}}\tilde{p}_{k,i}\tilde{p}_{k,l}\tens\tilde{p}_{l,j}\\
&=\sum_{k,l\in{I}}\delta_{i,l}\tilde{p}_{k,i}\tens\tilde{p}_{l,j}=\sum_{k\in{I}}\tilde{p}_{k,i}\tens\tilde{p}_{i,j}=\I\tens\tilde{p}_{i,j}
\end{align*}
which show that
\[
\operatorname{span}\bigl\{\widetilde{\Delta}_\cG(a)(\I\tens{b}){\bigl.\st\bigr.}a,b\in\widetilde{\sA}(\cG)\bigr\}\quad\text{and}\quad
\operatorname{span}\bigl\{(a\tens\I)\widetilde{\Delta}_\cG(b){\bigl.\st\bigr.}a,b\in\widetilde{\sA}(\cG)\bigr\}
\]
are dense in $\widetilde{\sA}(\cG)\tens\widetilde{\sA}(\cG)$ (see \cite[Definition 2.1]{cqg}).

Given an action $\Psi\in\Mor(\CC^I,\CC^I\tens\C(\KK))$ of a compact quantum group $\KK$ which is compatible with $\cG$ and preserves $\omega$ the morphism $\Theta\in\Mor(\widetilde{\sA}(\cG),\C(\KK))$ satisfying \eqref{ThetaPsi} exists by universal property of $(\widetilde{\sA}(\cG),\widetilde{\Phi}_\cG)$. The fact that $\Theta$ is a morphisms of compact quantum groups (i.e.~preserves comultiplications) can be proved in the same way as in Theorem \ref{semi}\eqref{semi3}.
\end{proof}

Let $G$ be a finite graph and let $\cG$ be the corresponding graph endomorphism game. Then the Gelfand transform is an epimorphism of $\widetilde{\sA}(\cG)$ onto the algebra of functions on the group of automorphisms of $G$. In other words the classical points of the compact quantum group $\GG$ described in Theorem \ref{cqgG}\eqref{cqgG2} form the classical automorphism group of $G$. Indeed, if $\chi$ is a character of $\widetilde{\sA}(\cG)$ and $r_{k,l}=\chi(p_{k,l})$ for all $k,l\in{I}$ then
\begin{itemize}
\item each $r_{i,j}$ is either $0$ or $1$,
\item for each $i$ there is precisely one $j$ such that $r_{i,j}\neq{0}$,
\item for each $j$ there is precisely one $i$ such that $r_{i,j}\neq{0}$.
\end{itemize}
It follows that $\chi$ determines a unique permutation $\pi_\chi$ of the vertices of $G$. Moreover, compatibility with the rules of $\cG$ implies that if two vertices $i$ and $j$ are connected by an edge then so are $\pi_\chi(i)$ and $\pi_\chi(j)$. In other words $\pi_\chi$ belongs to the group of permutations of $G$. Conversely, any automorphism $\varphi$ of $G$ gives rise to a character of $\widetilde{\sA}(\cG)$ by sending $p_{i,j}$ to $1$ if $\varphi(i)=j$ and to $0$ otherwise.

This shows that the quantum group $\GG$ described in Theorem \ref{cqgG} is precisely the one discussed in \cite{banica} for small metric spaces arising from graphs. By dividing out certain additional relations one obtains the quantum automorphism group of the same graph described in \cite[Section 3]{jb}.

\section{acknowledgments}
The author wishes to thank the referee for very helpful comments and suggestion of several references. The research presented in this paper was partially supported by NCN (National Science Center, Poland) grant no.~2015/17/B/ST1/00085.

\end{document}